\def\log{\mathrm{log}\,}
\theoremstyle{plain}
\newtheorem{thm}{Theorem}[section]  
\newtheorem{cor}[thm]{{Corollary}} 
\newtheorem{lem}[thm]{{Lemma}}
\theoremstyle{remark}
\numberwithin{equation}{section}
\theoremstyle{plain}
\newcommand{\thistheoremname}{}
\newtheorem*{genericthm*}{\thistheoremname}
\newenvironment{namedthm*}[1]{\renewcommand{\thistheoremname}{#1}%
	\begin{genericthm*}}
	{\end{genericthm*}}
\newtheoremstyle{named}{}{}{\itshape}{}{\bfseries}{.}{.5em}{\thmnote{#3's }#1}
\theoremstyle{named}
\newcommand\thankssymb[1]{\textsuperscript{\@fnsymbol{#1}}}
\begin{document} 
\title[A $p$-adic Second Main Theorem]{\bf A $p$-adic Second Main Theorem} 


\subjclass[2010]{32H30, 32P05, 11J97}
\keywords{$p$-adic analytic curves, value distribution theory, Second Main Theorem}

\author{Dinh Tuan Huynh}

\address{Department of Mathematics, University of Education, Hue University, 34 Le Loi St., Hue City, Vietnam}
\email{huynhdinhtuan@dhsphue.edu.vn}

\dedicatory{Tien Canh Nguyen, in memoriam}

\begin{abstract}
Let $\mathbf{K}$ be an algebraically closed field of arbitrary characteristic, complete with respect to a non-archimedean absolute value $|\,|$. We establish a Second Main Theorem type estimate for analytic map $f\colon \mathbf{K}\rightarrow\mathbb{P}^n(\mathbf{K})$ and a family of $n$ hypersurfaces in $\mathbb{P}^n(\mathbf{K})$ intersecting transversally and not all being hyperplanes. This implements the previous work of Levin where the case of all hypersurfaces having degree greater than one was studied.
\end{abstract}

\maketitle

\section{Introduction }
The classical Nevanlinna Theory was established about one hundred years ago by comparing the frequency of impacts of a holomorphic map $f\colon\mathbb{C}\rightarrow\mathbb{P}^1(\mathbb{C})$ with $q\geq 3$ points in $\mathbb{P}^1(\mathbb{C})$ with the growth order of $f$. This theory extends  the fundamental theorem of algebra
from polynomials to meromorphic functions and could be  also regarded as a quantitative version of the Little Picard Theorem. Nevanlinna theory consists of two fundamental inequalities, called the First Main Theorem and the Second Main Theorem. The First Main Theorem is a direct consequence of the Poisson-Jensen formula, and we now have a satisfactory theory for it in any higher dimension. The deeper Second Main Theorem was established in several situations, but remains largely open in general.

The $p$-adic Nevanlinna theory studies the distribution of $p$-adic meromorphic functions on $\mathbb{C}_p$, the completion of the algebraic closure of the $p$-adic numbers $\mathbb{Q}_p$, in an analogous way the classical Nevanlinna theory studies complex meromorphic functions. More generally, this theory could be extended from $\mathbb{C}_p$ to any algebraically closed field $\mathbf{K}$ of arbitrary characteristic, complete with respect to a non-archimedean absolute value $|\cdot\,|$.

For an entire function $\varphi=\sum_{i=0}^{\infty}a_iz^i$ on the field $\mathbf{K}$, for each real number $r>0$, define
\begin{align*}
|\varphi|_r:&=\sup_i|a_i||r^i|\\
&=\sup\{|\varphi(z)|:z\in\mathbf{K},\, |z|=r\}\\
&=\sup\{|\varphi(z)|:z\in\mathbf{K},\, |z|\leq r\}.
\end{align*}
Let $f\colon\mathbf{K}\rightarrow\mathbb{P}^n(\mathbf{K})$ be a nonconstant analytic map and let $\tilde{f}=(f_0,\dots,f_n)$ be a reduced representation of $f$, where $f_0,\dots,f_n$ are entire functions on $\mathbf{K}$ without common zeros. The {\it order function} of $f$ is defined by
\[
T_f(r):=\log\|f\|_r,
\]
where $\|f\|_r=\max\{|f_0|_r,\dots,|f_n|_r\}$. This function is well defined, independent of the choice of the reduce representation of $f$ up to an additive constant.

Let $D$ be a hypersurface in $\mathbb{P}^n(\mathbf{K})$ defined by a homogeneous polynomial $Q\in\mathbf{K}[x_0,\dots,x_n]$ of degree $d\geq 1$. Suppose that the image of $f$ is not contained in $D$, then the entire function $Q\circ f$ is not identically zero. Let $n_f(r,D)$ be the number of zeros of $Q\circ f$ on the disc $\mathbf{B}_r:=\{z\in \mathbf{K}:|z|\leq r\}$, which is finite, the {\it counting function} of $f$ with respect to $D$ is then defined as
\[
N_f(r,D):=\int_{0}^{r}\dfrac{n_f(t,D)-n_f(0,D)}{t}dt+n_f(0,D)\,\log r,
\]
capturing the asymptotic frequency of intersection of $f(\mathbf{K})$ and $D$. The {\it proximity function} of $f$ associated to $D$ is defined as
\[
m_f(r,D):=\log \,\dfrac{\|f\|_r^d}{|Q\circ f|_r},
\]
which is independent of the choice of $Q$, up to an additive constant.

The Nevanlinna theory in the non-archimedean setting is then established by comparing the above three associated functions. Here and throughout this note, the implied constant in the term $O(1)$ is independent of $r$. The First Main Theorem in this context  reads as follows.
\begin{namedthm*}{First Main Theorem}
	Let $D$ be a hypersurface of degree $d$ in $\mathbb{P}^n(\mathbf{K})$. Let $f\colon\mathbf{K}\rightarrow\mathbb{P}^n(\mathbf{K})$ be a nonconstant analytic map with $f(\mathbf{K})\not\subset D$. Then for any $r>0$, one has
	\[
	m_f(r,D)+N_f(r,D)=d\, T_f(r)+O(1),
	\]
	and consequently
	\[
	N_f(r,D)\leq d\,T_f(r)+O(1).
	\]
\end{namedthm*} 

On the other hand, in the harder part, one tries to find an upper bound for $T_f(r)$ in terms of the sum of certain counting functions. Such types of estimates are so-called Second Main Theorems.

A family of hypersurfaces $\{D_i\}_{1\leq i\leq q}$ in $\mathbb{P}^n(\mathbf{K})$ is said to be in general position if $\dim\cap_{i\in I}D_i\leq n-|I|$ for every subset $I\subset\{1,\dots,q\}$ having cardinality $|I|\leq n+1$, with the convention that $\dim\varnothing=-1$.  Unlike the case of 
Nevanlinna theory in complex setting (cf. \cite{Ru04,Ru09}), Ru~\cite{Ru2001} pointed out that in the $p$-adic context, if one disregards the ramification term, then the Second Main Theorem  could be deduced directly from the First Main Theorem. Based on this observation, he  gave a simple proof for Second Main Theorem for curvilinear hypersurfaces $D_i$ in general position in projective space.
\begin{namedthm*}{Ru's Second Main Theorem}
	Let $\{D_i\}_{1\leq i\leq q}$ be a family of hypersurfaces in general position in $\mathbb{P}^n(\mathbf{K})$. Let $f\colon\mathbf{K}\rightarrow\mathbb{P}^n(\mathbf{K})$ be a nonconstant analytic map whose image is not completely contained in any of the hypersurfaces $D_1,\dots,D_q$. Then for any $r>0$, one has
	\[
	\sum_{i=1}^q\dfrac{m_f(r,D_i)}{\deg(D_i)}
	\leq
	n\,T_f(r)+O(1).
	\]
\end{namedthm*}
In the case where all $D_i$ are hyperplanes, a Second Main Theorem involving the ramification term was established by Boutabaa~\cite{Boutabaa91} (see also \cite{Khoai-Tu95, Cherry-Ye1997}). An~\cite{An2007} extended Ru's result to the case of arbitrary projective variety.  

When all hypersurfaces $D_i$ have degree greater than one, under an extra generic geometric assumption about transversal intersections, Levin~\cite{Levin2015} established a Second Main Theorem type estimate involving the degree of $D_i$, which improved the above mentioned results of An, Ru. When the ambient space is $\mathbb{P}^n(\mathbf{K})$, Levin's result takes the following form.

\begin{namedthm*}{Levin's Second Main Theorem}
	Let $\{D_i\}_{1\leq i\leq q}$ be a family of smooth hypersurfaces in general position in $\mathbb{P}^n(\mathbf{K})$ such that all intersections among them are transversal. Let $f\colon\mathbf{K}\rightarrow\mathbb{P}^n(\mathbf{K})$ be a nonconstant analytic map whose image is not completely contained in any of the hypersurfaces $D_1,\dots,D_q$. Then for any $r>0$, one has
	\[
	\sum_{i=1}^q\dfrac{m_f(r,D_i)}{\deg(D_i)}
	\leq
	\bigg(n-1+\max_i\dfrac{1}{\deg (D_i)}\bigg)\,T_f(r)+O(1).
	\]
\end{namedthm*}

Our aim in this note is to treat the remaining case in the paper of Levin \cite{Levin2015},  where at least one of the $D_i$ is a hyperplane. Keeping the transversality assumption, we prove a Second Main Theorem type estimate for a family of $n$ hypersurfaces in $\mathbb{P}^n(\mathbf{K})$, not all being hyperplanes.
We shall employ the following fact extracted from the proof of the Second Main Theorem of Ru, An and Levin.
 \begin{lem}
 \label{only n proximity could be not bounded}
 	Let $\{D_i\}_{1\leq i\leq q}$ be a family of $q\geq n$ hypersurfaces in general position in $\mathbb{P}^n(\mathbf{K})$. If $f\colon\mathbf{K}\rightarrow\mathbb{P}^n(\mathbf{K})$ is a nonconstant analytic map whose image is not completely contained in any of the hypersurfaces $D_1,\dots,D_q$, then there exist at least $q-n$ indexes $i$ such that $m_f(r,D_i)=O(1)$.
 \end{lem}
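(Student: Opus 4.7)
The plan is to adapt the standard Nullstellensatz-plus-Gauss-norm argument underlying Ru's Second Main Theorem cited above. First, by replacing each defining polynomial $Q_i$ of $D_i$ (of degree $d_i$) by the power $Q_i^{d/d_i}$, where $d$ is a common multiple of the $d_i$, I reduce to the case where all $Q_i$ have a common degree $d$; this replacement only rescales each $m_f(r,D_i)$ by a positive rational factor and so preserves the conclusion ``$m_f(r,D_i)=O(1)$''. The reduced representation $\tilde{f}=(f_0,\dots,f_n)$ keeps $Q_i\circ f\not\equiv 0$ since $f(\mathbf{K})\not\subset D_i$.

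Next, fix any subset $I\subset\{1,\dots,q\}$ with $|I|=n+1$. By the general position hypothesis, $\bigcap_{i\in I}D_i=\emptyset$ in $\mathbb{P}^n(\mathbf{K})$, so by Hilbert's Nullstellensatz there exist an integer $M\geq d$ and homogeneous polynomials $h_{i,k}(x_0,\dots,x_n)$ of degree $M-d$ satisfying
\[
x_k^M=\sum_{i\in I}h_{i,k}(x)\,Q_i(x),\qquad k=0,\dots,n.
\]
Substituting $\tilde{f}$, applying the ultrametric triangle inequality, and using the elementary estimate $|h_{i,k}(\tilde{f})|_r\leq C_{i,k}\,\|f\|_r^{M-d}$ (with $C_{i,k}$ the maximum absolute value of the coefficients of $h_{i,k}$), I pass to the Gauss norm $|\,|_r$, which is multiplicative on entire functions, and obtain
\[
\|f\|_r^M\leq C_I\,\|f\|_r^{M-d}\,\max_{i\in I}|Q_i\circ f|_r,
\]
for some constant $C_I$ depending only on $I$. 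Dividing through by $\|f\|_r^{M-d}$ and taking logarithms yields
\[
\min_{i\in I}m_f(r,D_i)\leq\log C_I.
\]

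To finish, set $C:=\max_{|I|=n+1}C_I$, a finite maximum over the collection of $(n+1)$-subsets. For each $r>0$, relabel the $D_i$ so that $m_f(r,D_1)\geq m_f(r,D_2)\geq\dots\geq m_f(r,D_q)$ and apply the preceding inequality to $I=\{1,\dots,n+1\}$; this forces $m_f(r,D_{n+1})\leq\log C$, and by the ordering $m_f(r,D_i)\leq\log C$ for every $i\geq n+1$. Since the bound $\log C$ is independent of $r$, at least $q-n$ of the values $m_f(r,D_i)$ are $O(1)$, as required. The principal delicate step is passing from the pointwise Nullstellensatz identity to the inequality among Gauss norms: this uses crucially the non-archimedean multiplicativity $|gh|_r=|g|_r|h|_r$ for entire functions together with the ultrametric triangle inequality, and is precisely what makes the $p$-adic argument essentially algebraic, in contrast with the analytic Poisson-Jensen averaging required in the complex setting.
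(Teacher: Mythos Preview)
Your argument is correct and is precisely the Nullstellensatz/Gauss-norm argument of Ru (and of An, Levin) that the paper invokes; the paper itself does not supply an independent proof of this lemma but simply extracts it from those references. One small clarification worth recording: the lemma's phrasing suggests a fixed set of $q-n$ indices, but what your argument (and the cited proofs) actually deliver is the per-$r$ statement that for every $r>0$ at least $q-n$ of the values $m_f(r,D_i)$ lie below a single constant $\log C$ independent of $r$---and this per-$r$ version is exactly what the paper uses in the proofs of Theorems~\ref{one line one conic}, \ref{one line one curve}, and \ref{general case, linearly nondegenerate}.
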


\section{Results in projective plane}
\begin{thm}
\label{one line one conic}
	Let $f\colon \mathbf{K}\rightarrow\mathbb{P}^2(\mathbf{K})$ be a nonconstant analytic curve. Let $L$ be a line and $C$ be a nonsingular conic in $\mathbb{P}^2(\mathbf{K})$ and suppose that $L,C$ intersect transversally. If the image of $f$ is not completely contained in $L$ and $C$ and any tangent line of $\mathcal{C}$ at the intersection points $L\cap C$, then
	\[
m_f(r,L)+	\dfrac{m_f(r,C)}{2}\leq \dfrac{3}{2}\,T_f(r)+O(1).
	\]
\end{thm}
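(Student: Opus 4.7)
The strategy is to reduce to Lemma~\ref{only n proximity could be not bounded} and the First Main Theorem via an algebraic identity.

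By transversality $L\cap C = \{P_1, P_2\}$ consists of two distinct points, and the tangent lines $T_1, T_2$ to $C$ at these points differ from $L$. The three lines $L, T_1, T_2$ are in general position in $\mathbb{P}^2(\mathbf{K})$: if they were concurrent, then $T_2$ (say) would meet $L$ at a second point $P_1 = L\cap T_1$, forcing $T_2 = L$. The key algebraic input is the identity
\[
Q_C = \alpha\, T_1 T_2 + \beta\, L^2, \qquad \alpha, \beta \in \mathbf{K}\setminus\{0\},
\]
where we conflate a line with its defining linear form. Indeed, both $Q_C$ and $T_1 T_2$ restrict on $L$ to nonzero degree-$2$ forms vanishing at $P_1+P_2$, hence are proportional there, so $Q_C - \alpha T_1 T_2 = L \cdot L_0$ for a suitable $\alpha\neq 0$ and some linear form $L_0$; a local analysis at $P_1$ using the tangency of $T_1$ to $C$ forces $L_0$ to be proportional to $L$, giving the claimed identity. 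The coefficient $\beta$ must be nonzero since $C$ is smooth whereas $\alpha T_1 T_2$ is reducible.

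Having the identity in hand, I would apply Lemma~\ref{only n proximity could be not bounded} to the family $\{L, T_1, T_2\}$ with $q=3$, $n=2$, obtaining that at least one of $m_f(r, L), m_f(r, T_1), m_f(r, T_2)$ is $O(1)$. The case $m_f(r, L) = O(1)$ is immediate from the First Main Theorem bound $m_f(r,C) \le 2 T_f(r) + O(1)$. Otherwise, by symmetry one may assume $m_f(r, T_1) = O(1)$, so that $|T_1\circ f|_r \ge c\,\|f\|_r$ for some $c > 0$. Rearranging the identity as $\alpha (T_1\circ f)(T_2\circ f) = Q_C\circ f - \beta (L\circ f)^2$, applying the $p$-adic ultrametric inequality $|\alpha|\,|T_1\circ f|_r\,|T_2\circ f|_r \le \max(|Q_C\circ f|_r, |\beta|\,|L\circ f|_r^2)$, dividing by $|T_1\circ f|_r$ and translating into proximities, I would derive
\[
\min\bigl(m_f(r, C),\, 2 m_f(r, L)\bigr) \le m_f(r, T_2) + O(1) \le T_f(r) + O(1).
\]

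A short sub-case analysis then closes the argument. If $m_f(r, C) \le 2 m_f(r, L)$, then $m_f(r, C) \le T_f(r) + O(1)$ by the previous inequality, which together with $m_f(r, L) \le T_f(r) + O(1)$ gives $m_f(r, L) + m_f(r, C)/2 \le \tfrac{3}{2} T_f(r) + O(1)$. Otherwise $m_f(r, L) \le \tfrac12 T_f(r) + O(1)$, and the First Main Theorem bound $m_f(r, C)/2 \le T_f(r) + O(1)$ again delivers the desired inequality. The main obstacle is the algebraic identity itself: both the existence of the decomposition and the nonvanishing of the coefficient $\beta$ rest essentially on the smoothness of $C$ together with the transversality of $L\cap C$, without which the whole construction would collapse.
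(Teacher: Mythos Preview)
Your argument is correct and follows essentially the same route as the paper: both hinge on the identity $Q_C=\alpha\,T_1T_2+\beta\,L^2$ (the paper writes it as $H_1H_2=aC+bL^2$ and invokes Max Noether's theorem rather than proving it by hand) combined with Lemma~\ref{only n proximity could be not bounded} and the First Main Theorem. The only difference is bookkeeping: the paper first splits on which term in the ultrametric maximum dominates and then applies the Lemma to the triple $\{C,H_1,H_2\}$ or $\{L,H_1,H_2\}$ accordingly, whereas you apply the Lemma once to $\{L,T_1,T_2\}$ and then run the sub-case analysis; both organisations yield the same bound with the same inputs.
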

\begin{proof}
Let $H_1,H_2$ be the two tangent lines of $C$ at the intersection points of $L$ and $C$, then it follows from Max Noether's fundamental theorem that
\begin{equation}
\label{h1h2-ac+bl^2}
H_1H_2=aC+bL^2,
\end{equation}
for some nonzero constants $a,b$. Here we also use the notions $H_1,H_2,C,L$ to denote the homogeneous polynomials defining these curves. The equality \eqref{h1h2-ac+bl^2} yields
\[
|H_1\circ f|_r\cdot |H_2\circ f|_r\leq M\max\{|L\circ f|^2_r,|C\circ f|_r\},
\]
where $M$ is some positive constant independent of $r$. Hence either one has
\begin{equation}
\label{bound for mf L}
m_f(r,L^2)\leq m_f(r,H_1)+m_f(r,H_2)+O(1),
\end{equation}
or
\begin{equation}
\label{bound for mf C}
m_f(r,C)\leq m_f(r,H_1)+m_f(r,H_2)+O(1).
\end{equation}
If the inequality \eqref{bound for mf L} holds, then
\[
m_f(r,L)+	\dfrac{m_f(r,C)}{2}
\leq
\dfrac{m_f(r,H_1)+m_f(r,H_2)}{2}
+
\dfrac{m_f(r,C)}{2}
+O(1).
\]
By Lemma~\ref{only n proximity could be not bounded}, there is at least one bounded function among  $m_f(r,C), m_f(r,H_1), m_f(r,H_2)$. Therefore, the above inequality  yields
\[
m_f(r,L)+	\dfrac{m_f(r,C)}{2}
\leq
\dfrac{3}{2}\,T_f(r)+O(1).
\]
Similarly, if \eqref{bound for mf C} holds, then
\[
m_f(r,L)+	\dfrac{m_f(r,C)}{2}
\leq
m_f(r,L)+\dfrac{m_f(r,H_1)+m_f(r,H_2)}{2}+O(1).
\]
Again one can apply Lemma~\ref{only n proximity could be not bounded} to three proximity functions in the right hand side of this inequality and one receives
\[
m_f(r,L)+	\dfrac{m_f(r,C)}{2}
\leq
\dfrac{3}{2}\,T_f(r)+O(1).
\]
\end{proof}
\begin{cor}
Let $f\colon \mathbf{K}\rightarrow\mathbb{P}^2(\mathbf{K})$ be a nonconstant analytic curve. Let $L$ be a line and $C$ be a nonsingular conic in $\mathbb{P}^2(\mathbf{K})$ and suppose that $L,C$ intersect transversally. If $f$ avoids $L,C$, then the image of $f$ must be contained in some tangent line of $C$ at the intersection points of $L$ and $C$.	
\end{cor}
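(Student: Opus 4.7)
The plan is to argue by contradiction, assuming that $f$ avoids both $L$ and $C$ while its image lies in none of the tangent lines of $C$ at the two points of $L\cap C$, and deriving that $T_f(r)$ is bounded, contradicting the fact that $f$ is nonconstant (for which $T_f(r)\to\infty$ as $r\to\infty$).

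First I would verify that the hypotheses of Theorem~\ref{one line one conic} apply. Since $f$ avoids $L$, certainly $f(\mathbf{K})\not\subset L$ (indeed $f(\mathbf{K})\cap L=\varnothing$), and likewise $f(\mathbf{K})\not\subset C$. By our contradictory assumption, the image of $f$ is not contained in any tangent line of $C$ at $L\cap C$. Hence Theorem~\ref{one line one conic} yields
\[
m_f(r,L)+\frac{m_f(r,C)}{2}\leq \frac{3}{2}\,T_f(r)+O(1).
\]

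Next I would convert the "avoids" hypothesis into a lower bound on the left-hand side via the First Main Theorem. Since $f$ avoids $L$, the function $L\circ f$ is nowhere zero on $\mathbf{K}$, so $n_f(r,L)=0$ and hence $N_f(r,L)=O(1)$; by the First Main Theorem $m_f(r,L)=T_f(r)+O(1)$. Similarly, since $f$ avoids $C$, we obtain $m_f(r,C)=2\,T_f(r)+O(1)$. Substituting these into the above inequality yields
\[
2\,T_f(r)+O(1)\leq \frac{3}{2}\,T_f(r)+O(1),
\]
which forces $T_f(r)=O(1)$, contradicting the fact that $f$ is nonconstant. Therefore the assumption fails, and the image of $f$ must lie in one of the tangent lines of $C$ at a point of $L\cap C$.

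There is no serious obstacle here; the only subtlety is ensuring that "$f$ avoids $L,C$" really permits application of Theorem~\ref{one line one conic} (which it does, since avoidance is strictly stronger than the non-containment hypothesis used there), and invoking the standard fact that a nonconstant $\mathbf{K}$-analytic map to projective space has unbounded order function. The argument is otherwise a direct combination of the Second Main Theorem just proved and the First Main Theorem.
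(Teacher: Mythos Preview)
Your proof is correct and follows essentially the same argument as the paper: assume for contradiction that $f$ lies in no tangent line, apply Theorem~\ref{one line one conic}, use the First Main Theorem with $N_f(r,L)=N_f(r,C)=0$ to obtain $m_f(r,L)=\tfrac{1}{2}m_f(r,C)=T_f(r)+O(1)$, and deduce $2\,T_f(r)\leq \tfrac{3}{2}\,T_f(r)+O(1)$, contradicting that $f$ is nonconstant. Your write-up is simply a bit more explicit about verifying the hypotheses of Theorem~\ref{one line one conic} and invoking the First Main Theorem.
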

\begin{proof}
	Since $f$ avoids $L,C$, one has $m_f(r,L)=\dfrac{m_f(r,C)}{2}=T_f(r)+O(1)$.
Suppose on the contrary that the image of $f$ is not contained in any tangent line of $C$ at the intersection point of $L$ and $C$, then it follows from Theorem~\ref{one line one conic} that $2\, T_f(r)\leq\dfrac{3}{2}\, T_f(r)+O(1)$, a contradiction.
\end{proof}

When $C$ is a nonsingular plane curve of degree $d\geq 3$, we can put a generic assumption that the line $L$ does not pass through the point of multiplicity $d$ of $C$. Under this additional condition, the  assumption in Theorem~\ref{one line one conic} that the image of $f$ is not contained in any tangent line of $C$  at the intersection point $L\cap C$  can be removed.

\begin{thm}
	\label{one line one curve}
	Let $f\colon \mathbf{K}\rightarrow\mathbb{P}^2(\mathbf{K})$ be a nonconstant analytic curve. Let $L$ be a line and $C$ be a nonsingular curve of degree $d\geq 3$ in $\mathbb{P}^2(\mathbf{K})$. Suppose that $L,C$ intersect transversally and $L$ does not pass through the point of multiplicity $d$ of $C$. If  the image of $f$ is not completely contained in $L\cup C$, then
	\[
	m_f(r,L)+	\dfrac{m_f(r,C)}{d}\leq \bigg(2-\dfrac{1}{d}\bigg)\,T_f(r)+O(1).
	\]
\end{thm}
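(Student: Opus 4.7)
Following the same template as the proof of Theorem~\ref{one line one conic}, let $p_1,\dots,p_d\in L\cap C$ be the $d$ distinct transversal intersection points, and let $H_1,\dots,H_d$ denote the tangent lines to $C$ at $p_1,\dots,p_d$ respectively. Since the degree-$d$ form $H_1 H_2\cdots H_d$ vanishes at each $p_i$, it lies in the ideal $(L,C)$, and Max Noether's fundamental theorem produces a decomposition
\[
H_1 H_2\cdots H_d = \alpha L^d + \beta C
\]
for some nonzero constants $\alpha,\beta\in\mathbf{K}$. The role of the hypothesis that $L$ does not pass through the point of multiplicity $d$ of $C$ is precisely to guarantee that the coefficient of $L$ in this decomposition is a scalar multiple of $L^{d-1}$; a priori, Max Noether only yields $H_1\cdots H_d = A\cdot L+\beta C$ for some degree-$(d-1)$ form $A$ that need not be proportional to $L^{d-1}$. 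Establishing this clean form of the identity is the main obstacle of the proof, and is exactly where the extra hypothesis of Theorem~\ref{one line one conic} (absence of tangent-line containment) is replaced by the more flexible multiplicity condition suited to $d\geq 3$.

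Applying the ultrametric inequality to the displayed identity gives
\[
|H_1\circ f|_r\cdots|H_d\circ f|_r \leq M\cdot\max\bigl\{|L\circ f|_r^d,\ |C\circ f|_r\bigr\}
\]
for a suitable constant $M>0$. Dividing through by $\|f\|_r^d$ and taking logarithms, one obtains the dichotomy: either
\[
d\cdot m_f(r,L)\leq \sum_{i=1}^d m_f(r,H_i)+O(1)\qquad\text{(Case A)},
\]
or
\[
m_f(r,C)\leq \sum_{i=1}^d m_f(r,H_i)+O(1)\qquad\text{(Case B)}.
\]

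In Case A, dividing by $d$ and adding $m_f(r,C)/d$ yields
\[
m_f(r,L)+\frac{m_f(r,C)}{d}\leq\frac{1}{d}\Bigl[\sum_{i=1}^d m_f(r,H_i)+m_f(r,C)\Bigr]+O(1).
\]
Lemma~\ref{only n proximity could be not bounded}, applied to the $(d+1)$-element family $\{H_1,\dots,H_d,C\}$ (which lies in general position in $\mathbb{P}^2(\mathbf{K})$ by transversality of $L\cap C$ together with the genericity assumption), forces at least $d-1$ of the listed proximity functions to be $O(1)$. Combined with the First Main Theorem bounds $m_f(r,H_i)\leq T_f(r)+O(1)$ and $m_f(r,C)\leq d\,T_f(r)+O(1)$, a short case-by-case check on which of the (at most two) remaining terms are unbounded delivers the desired estimate $m_f(r,L)+m_f(r,C)/d\leq (2-1/d)\,T_f(r)+O(1)$. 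Case B is handled symmetrically: starting from
\[
m_f(r,L)+\frac{m_f(r,C)}{d}\leq m_f(r,L)+\frac{1}{d}\sum_{i=1}^d m_f(r,H_i)+O(1),
\]
one applies Lemma~\ref{only n proximity could be not bounded} to the $(d+1)$-line family $\{L,H_1,\dots,H_d\}$ and proceeds as above.
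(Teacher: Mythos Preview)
Your argument has a genuine gap at the very first step: the identity
\[
H_1H_2\cdots H_d=\alpha L^d+\beta C
\]
with scalar $\alpha,\beta$ is \emph{false} for $d\geq 3$. To see this on a smooth cubic, restrict both sides to $C$: the divisor of $H_1H_2H_3$ on $C$ is $2p_1+q_1+2p_2+q_2+2p_3+q_3$ (each tangent $H_i$ meets $C$ at $p_i$ with multiplicity $2$ and at a residual point $q_i$), whereas the divisor of $L^3$ on $C$ is $3p_1+3p_2+3p_3$. These agree only when every $p_i$ is a flex, i.e.\ a point of multiplicity $d$, which is exactly what the hypothesis \emph{excludes}. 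So the ``clean form'' you are trying to force cannot hold, and your attribution of the multiplicity hypothesis to this step is mistaken. The correct Noether relation, and the one the paper uses, is
\[
H_1\cdots H_d=aC+BL^2,
\]
with $a$ a nonzero constant and $B$ a form of degree $d-2$; this follows because each $H_i$ is tangent to $C$ at $p_i$, so $H_1\cdots H_d$ lies in the local ideal $(C,L^2)$ at every $p_i$. From this one only gets $2\,m_f(r,L)\leq\sum_i m_f(r,H_i)+O(1)$ in Case~A, not the factor $d$ you wrote; the ensuing bound is $\tfrac{3}{2}\,T_f(r)+O(1)$, which is still $\leq (2-\tfrac{1}{d})T_f(r)+O(1)$ for $d\geq 2$.

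There is a second gap: you never treat the possibility that $f(\mathbf{K})\subset H_i$ for some $i$, in which case $m_f(r,H_i)$ is undefined and neither your dichotomy nor Lemma~\ref{only n proximity could be not bounded} applies. This degenerate case is precisely where the hypothesis that $L$ avoids the point of multiplicity $d$ is actually used: it guarantees that the tangent line $H_i$ meets $C$ at some further point $B_j\neq p_i$, so that one can apply the Second Main Theorem to $f\colon\mathbf{K}\to H_i\cong\mathbb{P}^1(\mathbf{K})$ against the at least two points $\{p_i,B_1,\dots\}$ and conclude directly. Without this separate argument the proof is incomplete.
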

\begin{proof}
 Let $H_1,\dots, H_d$ be the  tangent lines of $C$ at the intersection points $A_1,\dots,A_d$ of $L$ and $C$, then
\[
H_1\cdots H_d=aC+BL^2,
\]
for some non zero constant $a$ and some homogeneous polynomial $B$ of degree $d-2$. This implies
\[
\dfrac{|H_1\circ f|_r\cdots |H_d\circ f|_r}{\|f\|_r^d}\leq M\,\max\bigg\{\dfrac{|L\circ f|^2_r}{\|f\|_r^2},\dfrac{|C\circ f|_r}{\|f\|_r^2} \bigg\},
\]
for some positive constant $M$ independent of $r$. Thus, if the image of $f$ is not completely contained in any $H_i$, then one of the following inequalities holds true:
\begin{equation}
\label{prox L<=sum of prox Hi}
m_f(r,L^2)\leq\sum_{i=1}^d m_f(r,H_i)+O(1)
\end{equation}
or
\begin{equation}
\label{prox C<=sum of prox Hi}
m_f(r,C)\leq\sum_{i=1}^d m_f(r,H_i)+O(1).
\end{equation}
Similarly as in the proof of Theorem~\ref{one line one conic}, in the case where \eqref{prox L<=sum of prox Hi} holds, one has
\begin{equation}
\label{non-degenerate case 1}
m_f(r,L)+	\dfrac{m_f(r,C)}{d}
\leq
\dfrac{\sum_{i=1}^dm_f(r,H_i)}{2}+\dfrac{m_f(r,C)}{d}+O(1)
\leq
\dfrac{3}{2}\,T_f(r)+O(1)
\leq
\bigg(2-\dfrac{1}{d}\bigg)\,T_f(r)+O(1).
\end{equation}
In the remain case where \eqref{prox C<=sum of prox Hi} holds, one has
\begin{align}
\label{non-degenerate case 2}
m_f(r,L)+	\dfrac{m_f(r,C)}{d}
&\leq
m_f(r,L)+
\dfrac{\sum_{i=1}^dm_f(r,H_i)}{d}+O(1)\notag\\
&\leq
\bigg(
1+
\dfrac{1}{d}
\bigg)\, T_f(r)+O(1)\notag\\
&\leq
\bigg(2-\dfrac{1}{d}\bigg)
\,
T_f(r)+O(1).
\end{align}
Consider the case where the image of $f$ is contained in some tangent line $H_i$, says $H_1$. Since $A_1$ is not the point of multiplicity $d$, the tangent line $H_1$ must intersect $C$ at other points $B_1,\cdots, B_t\not=A_1$. Applying Second Main Theorem for the non-constant map $f\colon\mathbf{K}\rightarrow H_1\cong\mathbb{P}^1(\mathbf{K})$ and the set of at least two  points $\{A_1,B_1,\dots,B_t\}$, one has
\[
T_f(r)\leq\sum_{i=1}^tN_f(r,B_i)+N_f(r,A_1)+O(1),
\]
which implies $T_f(r)\leq N_f(r,L)+N_f(r,C)+O(1)$, or equivalently 
\[
m_f(r,L)+m_f(r,C)\leq d\,T_f(r)+O(1).
\]
Hence 
\begin{align}
\label{degenerate case}
m_f(L)+\dfrac{m_f(r,C)}{d}&
\leq
\bigg(1-\dfrac{1}{d}\bigg)\, m_f(r,L)
+
\dfrac{1}{d}\big(m_f(r,L)+m_f(r,C)\big)\notag\\
&\leq
\bigg(1-\dfrac{1}{d}\bigg)\,
 T_f(r)+T_f(r)+O(1)\notag\\
 &= \bigg(2-\dfrac{1}{d}\bigg)\,T_f(r)+O(1).
 \end{align}
 Combining \eqref{non-degenerate case 1}, \eqref{non-degenerate case 2}, \eqref{degenerate case}, the proof is finished.
\end{proof}

A variety $X$ over $\mathbf{K}$ is said to be Brody $\mathbf{K}$-hyperbolic if all analytic function $f\colon \mathbf{K}\rightarrow X$ is constant. Theorem~\ref{one line one curve} is a quantitative generalization of a result by An,
Wang, and Wong \cite{An-Wang-Wong2008}.
\begin{cor}
Let $L$ be a line and $C$ be a nonsingular curve of degree $d\geq 3$ in $\mathbb{P}^2(\mathbf{K})$. If $L,C$ intersect transversally and $L$ does not pass through the point of multiplicity $d$ of $C$, then the complement $\mathbb{P}^n(\mathbf{K})\setminus(C\cup L)$ is Brody $\mathbf{K}$-hyperbolic.
\end{cor}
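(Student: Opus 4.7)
The plan is to argue by contradiction, converting a hypothetical nonconstant map $f\colon \mathbf{K}\to \mathbb{P}^2(\mathbf{K})\setminus(C\cup L)$ into a numerical clash with Theorem~\ref{one line one curve}. The key observation is that $f$ \emph{saturates} both proximity terms on the left-hand side of that estimate, since it misses $L$ and $C$ entirely, and the resulting inequality will exceed the saving factor $(2-1/d)$ on the right.

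Concretely, since the image of $f$ is disjoint from $L$, the entire function $L\circ\tilde f$ has no zeros, so $n_f(r,L)\equiv 0$ and thus $N_f(r,L)=O(1)$; likewise $N_f(r,C)=O(1)$. The First Main Theorem then yields
\[
m_f(r,L)=T_f(r)+O(1),\qquad m_f(r,C)=d\,T_f(r)+O(1).
\]
The hypotheses of Theorem~\ref{one line one curve} are all satisfied (transversality, $L$ avoids the point of multiplicity $d$, and the image of $f$ is not contained in $L\cup C$ since it avoids this set altogether), so
\[
m_f(r,L)+\frac{m_f(r,C)}{d}\leq\Bigl(2-\frac{1}{d}\Bigr)T_f(r)+O(1).
\]
Substituting the First Main Theorem identities into the left-hand side gives
\[
2\,T_f(r)\leq\Bigl(2-\frac{1}{d}\Bigr)T_f(r)+O(1),
\]
and hence $\tfrac{1}{d}T_f(r)=O(1)$. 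In particular $T_f$ is bounded on $(0,\infty)$.

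To finish, I invoke the standard fact from $p$-adic Nevanlinna theory that $T_f(r)\to\infty$ as $r\to\infty$ for any nonconstant analytic curve $f\colon\mathbf{K}\to \mathbb{P}^2(\mathbf{K})$; boundedness of $T_f$ therefore forces $f$ to be constant, contradicting the assumption. I do not anticipate any real obstacle here, since the corollary is essentially a numerical read-off of Theorem~\ref{one line one curve} in the extreme case where $f$ omits both divisors: the two inputs needed — the First Main Theorem and the growth principle for $T_f$ — are entirely standard, and Theorem~\ref{one line one curve} does all of the substantive work.
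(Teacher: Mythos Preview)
Your proof is correct and follows essentially the same approach as the paper: assume a nonconstant $f$ omits $L$ and $C$, use the First Main Theorem to get $m_f(r,L)=T_f(r)+O(1)$ and $m_f(r,C)=d\,T_f(r)+O(1)$, and plug these into Theorem~\ref{one line one curve} to obtain $2\,T_f(r)\le(2-1/d)\,T_f(r)+O(1)$, contradicting the unboundedness of $T_f$. If anything, your write-up is cleaner than the paper's, which carries over an unnecessary clause about tangent lines from the earlier corollary (Theorem~\ref{one line one curve} does not require that hypothesis).
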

\begin{proof}
	If there exists a nonconstant analytic curve $f$ avoiding $L,C$, one has $m_f(r,L)=\dfrac{m_f(r,C)}{d}=T_f(r)+O(1)$.
	Suppose on the contrary that the image of $f$ is not contained in any tangent line of $C$ at the intersection point of $L$ and $C$, then it follows from Theorem~\ref{one line one curve} that $2\, T_f(r)\leq\bigg(2-\dfrac{1}{d}\bigg)\, T_f(r)+O(1)$, a contradiction.
\end{proof}
\section{Results in general cases}
\begin{thm}
	\label{general case, linearly nondegenerate}
	Let $f\colon \mathbf{K}\rightarrow\mathbb{P}^n(\mathbf{K})$ be a nonconstant analytic curve. Let $\{D_i\}_{1\leq i\leq n}$ be a family of $n$ hypersurfaces in $\mathbb{P}^n(\mathbf{K})$ intersecting transversally, with total degree $\sum_{i=1}^n\deg D_i\geq n+1$.  If  the image of $f$ is not completely contained in any $D_i$ and any tangent hyperplane of $D_i$ at the intersection points $\cap_{i=1}^nD_i$, then
	\[
	\sum_{i=1}^n\dfrac{m_f(r,D_i)}{\deg D_i}\leq \bigg(n-\dfrac{1}{n}\bigg)\,T_f(r)+O(1).
	\]
\end{thm}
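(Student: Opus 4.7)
The plan is to generalize the strategy of Theorems~\ref{one line one conic} and~\ref{one line one curve} by establishing an algebraic identity among tangent hyperplanes and then performing a case analysis. By Levin's Second Main Theorem quoted in the Introduction, the subcase where every $d_i\geq 2$ is already covered (Levin's bound $(n-1+1/\max_i d_i)T_f(r)$ is stronger than $(n-1/n)T_f(r)$ for $n\geq 2$), so we may assume at least one $D_i$ is a hyperplane. Relabel so that $D_1,\dots,D_k$ are hyperplanes (write $L_i:=D_i$) and $d_{k+1},\dots,d_n\geq 2$ where $1\leq k\leq n-1$; choose $i_0=n$. Let $P_1,\dots,P_s$ be the intersection points of the $D_i$'s (so $s=\prod_i d_i$ by B\'ezout's theorem plus transversality), and for each $j$ let $H_j$ denote the tangent hyperplane of $D_n$ at $P_j$.

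The key algebraic input is a Max Noether-type identity of the form
\[
\prod_{j=1}^s H_j \;=\; A_n\,D_n \;+\; \sum_{1\leq i\leq i'\leq k} A_{ii'}\,L_iL_{i'} \;+\; \sum_{k<i<n} A_i\,D_i,
\]
with $A_n$, $A_{ii'}$, $A_i$ homogeneous polynomials of degrees $s-d_n$, $s-2$, and $s-d_i$ respectively. Locally at each $P_j$, using coordinates in which the $n$ tangent hyperplanes $H_{j,1},\dots,H_{j,n}$ become the coordinate hyperplanes, the tangency condition $D_n\equiv H_j\pmod{\mathfrak{m}_{P_j}^2}$ forces the linear part of $\prod_j H_j$ at $P_j$ to be absorbed by the $D_n$ generator, while the remaining quadratic part involving only the hyperplane directions is absorbed by $(L_1,\dots,L_k)^2$ and the other $D_i$ with $k<i<n$. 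Globalizing this local statement to the stated polynomial identity, in degree $s=\prod d_i$, is the principal technical obstacle and requires a Castelnuovo--Mumford-type regularity estimate for the thickened ideal $\bigl(D_n,(L_1,\dots,L_k)^2,D_{k+1},\dots,D_{n-1}\bigr)$, which is a complete intersection only when $k=1$.

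Granted the identity, the ultrametric triangle inequality together with $|L_iL_{i'}|_r\leq\max\{|L_i|_r^2,|L_{i'}|_r^2\}$ gives, at each $r$, one of three alternatives up to $O(1)$: $m_f(r,D_n)\leq\sum_j m_f(r,H_j)$, or $2m_f(r,L_i)\leq\sum_j m_f(r,H_j)$ for some $i\leq k$, or $m_f(r,D_i)\leq\sum_j m_f(r,H_j)$ for some $k<i<n$. In each alternative, one applies Ru's Second Main Theorem to the family comprising all $D_{i'}$ other than the dominated one, together with $H_1,\dots,H_s$ (in general position in $\mathbb{P}^n(\mathbf{K})$), yielding an upper bound $\sum m_f(r,D_{i'})/d_{i'}+\sum_j m_f(r,H_j)\leq n\,T_f(r)+O(1)$, and combines it with the alternative inequality via a weighted average, using also the First Main Theorem bounds $m_f(r,D_{i'})/d_{i'}\leq T_f(r)+O(1)$. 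The $L_i$-alternative produces the worst bound $(n-\tfrac{1}{2})T_f(r)$, while the $D_n$- and $D_i$-alternatives give $(n-1+1/d_n)T_f(r)$ and $(n-1+1/d_i)T_f(r)$ respectively; since $n\geq 2$ and $d_n,d_i\geq 2\geq n/(n-1)$, all three bounds are at most $(n-1/n)T_f(r)+O(1)$, yielding the desired estimate.
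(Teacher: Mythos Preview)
Your overall strategy---tangent hyperplanes, a Max Noether-type identity, then case analysis via the ultrametric inequality---matches the paper's, and the final arithmetic with the weighted averages is sound. However, there is a genuine gap precisely where you flag it: for $k\geq 2$ the single identity
\[
\prod_{j=1}^s H_j \;=\; A_n D_n + \sum_{1\leq i\leq i'\leq k} A_{ii'} L_iL_{i'} + \sum_{k<i<n} A_i D_i
\]
lives in the ideal $(D_n,D_{k+1},\dots,D_{n-1})+(L_1,\dots,L_k)^2$, which is not a complete intersection and hence not a priori saturated. The local membership you correctly describe does not promote to a global polynomial identity in degree $s$ without the regularity input you mention, and that input is not supplied. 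Your proof is thus incomplete exactly in the case ($k\geq 2$ hyperplanes) that goes beyond Theorems~\ref{one line one conic} and~\ref{one line one curve}.

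The paper does not overcome this obstacle by proving the needed regularity---it sidesteps it entirely. After reordering so that $m_f(r,D_1)\geq\cdots\geq m_f(r,D_n)$ and letting $i_0$ be the largest index with $\deg D_{i_0}\geq 2$, the paper writes down, for \emph{each} hyperplane $D_j$ with $i_0+1\leq j\leq n$, the separate identity
\[
H_1\cdots H_t=\sum_{i\neq j}R_{ij}D_i+R_jD_j^2,
\]
where the $H_\ell$ are tangent hyperplanes to $D_{i_0}$. Here the right-hand ideal $(D_i:i\neq j)+(D_j^2)$ \emph{is} a complete intersection of $n$ forms defining a zero-dimensional scheme, so the local-to-global step is standard Max Noether. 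To treat all hyperplanes at once, the paper then multiplies these $n-i_0$ identities together and exploits the ordering $m_f(r,D_{i_0+1})\geq\cdots\geq m_f(r,D_n)$ to control the resulting cross terms. This product trick, together with the ordering by proximity, is the idea missing from your approach: it reduces the multi-hyperplane case to repeated instances of the single-hyperplane (complete intersection) case, avoiding the non-complete-intersection ideal you are wrestling with.
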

\begin{proof}
Fixing the radius $r$, rearranging the indices if necessary, one may assume that 
\begin{equation}
\label{rearranging indices}
m_f(r,D_1)\geq m_f(r,D_2)\geq\cdots\geq m_f(r,D_n).
\end{equation}
If the degree of $D_n$ is at least $2$, then using Levin's arguments \cite[Theorem 10]{Levin2015}, one obtains
\[
\sum_{i=1}^n\dfrac{m_f(r,D_i)}{\deg D_i}\leq \bigg(n-1+\dfrac{1}{\deg D_n}\bigg)\,T_f(r)+O(1)
\leq
\bigg(n-\dfrac{1}{n}\bigg)\,T_f(r)+O(1).
\]

Suppose now that $D_n$ is a hyperplane. Let $i_0$ be the largest index such that $\deg D_{i_0}\geq 2$, then $1\leq i_0\leq n-1$. Let $\{P_1,\dots,P_t\}$ be the set of intersection points of $D_1,\dots, D_n$. For each $1\leq j\leq t$, let $H_j$ be the tangent hyperplane of $D_{i_0}$ at the point $P_j$, then the image of $f$ is not completely contained in any $H_j$. For each $j$ with $i_0+1\leq j\leq n$ one has
\begin{equation}
\label{max noether app-Pn}
H_1\cdots H_t=\sum_{i\not=j}R_{ij}D_{i}+R_jD_j^2,\quad i_0+1\leq j\leq n,
\end{equation}
for some homogeneous polynomials $R_{ij} (i\not=j)$ and $R_j$ with $\deg R_{ij}=t-\deg D_i$ and $\deg R_j=t-2$. If $i_0=n-1$, one deduces from the above equation that
\begin{equation}
\label{prox L<=sum of prox Hi, Pn}
m_f(r,D_n^2)\leq\sum_{i=1}^t m_f(r,H_i)+O(1)
\end{equation}
or
\begin{equation}
\label{prox C<=sum of prox Hi, Pn}
m_f(r,D_{n-1})\leq\sum_{i=1}^t m_f(r,H_i)+O(1).
\end{equation}
Using the same argument as in Theorem~\ref{one line one curve}, one gets the desired estimate.

Next, in the case where $i_0\leq n-2$, by multiplying both sides of the above $n-i_0$ equations, one receives
\[
(H_1\cdots H_t)^{n-i_0}=\prod_{j=i_0+1}^{n}\big(\sum_{i\not=j}R_{ij}D_{i}+R_jD_j^2\big).
\]
Using \eqref{rearranging indices}, one can deduce from this equality that
\[
\bigg(\dfrac{|H_1\circ f|_r\cdots |H_t\circ f|_r}{\|f\|_r^t}\bigg)^{n-i_0}
\leq
M\,\max_{1\leq k\leq n-i_0-1}\bigg\{\bigg(\dfrac{|D_n\circ f|_r}{\|f\|_r}\bigg)^{n-i_0+1},\bigg(\dfrac{|D_n\circ f|_r}{\|f\|_r}\bigg)^{n-i_0+1-k}\prod_{j=1}^{k}\bigg(\dfrac{|D_{n-j}\circ f|_r}{\|f\|_r}\bigg)^2 \bigg\},
\]
for some positive constant $M$ independent of $r$. Hence one either has

\[
m_f(r,D_n)\leq\dfrac{n-i_0}{n-i_0+1}\sum_{i=1}^t m_f(r,H_i)+O(1)
\leq \dfrac{n-1}{n}\sum_{i=1}^t m_f(r,H_i)+O(1)
\]
or
\[
(n-i_0+1-k)m_f(r,D_n)+2\sum_{j=1}^{k}m_f(r,D_{n-j})
\leq
(n-i_0)\sum_{i=1}^t m_f(r,H_i)+O(1),
\]
for some $1\leq k\leq n-i_0-1$, which implies, by using \eqref{general case, linearly nondegenerate}
\[
(n-i_0+1+k)m_f(r,D_n)\leq (n-i_0)\sum_{i=1}^t m_f(r,H_i)+O(1).
\]

Similarly as in the proof of Theorem~\ref{one line one conic} and Theorem~\ref{one line one curve}, by using Lemma~\ref{only n proximity could be not bounded}, in both cases, one always has
\[
\sum_{i=1}^n\dfrac{m_f(r,D_i)}{\deg D_i}\leq \bigg(n-\dfrac{1}{n}\bigg)\,T_f(r)+O(1).
\]
\end{proof}
\begin{cor}
Let $f\colon \mathbf{K}\rightarrow\mathbb{P}^n(\mathbf{K})$ be a nonconstant analytic curve. Let $\{D_i\}_{1\leq i\leq n}$ be a family of $n$ hypersurfaces in $\mathbb{P}^n(\mathbf{K})$ intersecting transversally, with total degree $\sum_{i=1}^n\deg D_i\geq n+1$.  If  $f$ avoids all $D_i$, then the image of $f$ must be contained in some tangent hyperplane of $D_i$ at some point of $\cap_{i=1}^nD_i$.
\end{cor}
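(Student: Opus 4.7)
The plan is to follow the same template used for the two corollaries after Theorem~\ref{one line one conic} and Theorem~\ref{one line one curve}: convert the avoidance hypothesis into an equality for the proximity functions via the First Main Theorem, and then derive a numerical contradiction with Theorem~\ref{general case, linearly nondegenerate}, unless the image of $f$ already lies in one of the exceptional tangent hyperplanes.

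First I would record the consequence of $f$ avoiding all $D_i$. Let $Q_i$ be a homogeneous polynomial defining $D_i$, so that $Q_i\circ f$ is an entire function on $\mathbf{K}$ with no zeros. Then $n_f(t,D_i)\equiv 0$, so the counting function satisfies $N_f(r,D_i)=0$ for all $r>0$. Invoking the First Main Theorem gives
\[
m_f(r,D_i)=\deg(D_i)\,T_f(r)+O(1)\qquad(1\leq i\leq n),
\]
and summing over $i$ after dividing by $\deg D_i$ produces
\[
\sum_{i=1}^{n}\dfrac{m_f(r,D_i)}{\deg D_i}=n\,T_f(r)+O(1).
\]

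Next I would argue by contradiction: suppose the image of $f$ is not contained in any tangent hyperplane of any $D_i$ at any point of $\cap_{i=1}^{n}D_i$. Combined with the hypothesis that $f$ avoids each $D_i$ (and hence is not contained in any $D_i$), this places $f$ in the exact scope of Theorem~\ref{general case, linearly nondegenerate}, yielding
\[
\sum_{i=1}^{n}\dfrac{m_f(r,D_i)}{\deg D_i}\leq\bigg(n-\dfrac{1}{n}\bigg)T_f(r)+O(1).
\]
Comparing with the previous display gives $n\,T_f(r)\leq (n-\tfrac{1}{n})T_f(r)+O(1)$, i.e.\ $\tfrac{1}{n}T_f(r)=O(1)$, which is impossible because $f$ is nonconstant and $T_f(r)\to\infty$ as $r\to\infty$. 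The contradiction shows the image of $f$ must lie in some tangent hyperplane of some $D_i$ at a point of $\cap_{i=1}^{n}D_i$.

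There is no genuine obstacle here; the proof is a direct reduction to Theorem~\ref{general case, linearly nondegenerate}. The only point to be careful about is the unboundedness of $T_f(r)$, which justifies turning the additive $O(1)$ slack into an actual contradiction, and the verification that the nonconstancy of $f$ together with $\sum\deg D_i\geq n+1$ is preserved under the assumption so that Theorem~\ref{general case, linearly nondegenerate} genuinely applies.
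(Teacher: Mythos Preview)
Your proof is correct and follows exactly the template the paper uses for the two earlier corollaries: avoidance forces $m_f(r,D_i)/\deg D_i = T_f(r)+O(1)$ for each $i$ via the First Main Theorem, and then assuming the image lies in no exceptional tangent hyperplane lets you invoke Theorem~\ref{general case, linearly nondegenerate} to obtain $n\,T_f(r)\le (n-\tfrac{1}{n})T_f(r)+O(1)$, contradicting the unboundedness of $T_f(r)$. The paper does not spell out a proof for this final corollary, but your argument is precisely the intended one.
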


Before closing this note, we would like to mention a result in \cite{An-Levin-Wang2011} about the Brody $\mathbf{K}$-hyperbolicity  of the complement $\mathbb{P}^n(\mathbf{K})\setminus\cup_{i=1}^nD_i$, where $\{D_i\}_{1\leq i\leq n}$ is a family of $n$ hypersurfaces in $\mathbb{P}^n(\mathbf{K})$   under some  additional assumptions generalizing those of Theorem~\ref{one line one curve}. It is interesting to seek a quantitative version of this result, namely a Second Main Theorem for nonconstant analytic curve $f\colon \mathbf{K}\rightarrow \mathbb{P}^n(\mathbf{K})$ intersecting the family $D_i$.

\section*{Acknowledgements}
The key technique in this note  is inspired by the method of employing auxiliary curves in plane geometry which is often used in high school. I would like to express my profound gratitude to my  maths teachers for their encouragements during the high school years. I want to thank Song-Yan Xie for helpful suggestions
which improved the exposition. This research is funded by University of Education, Hue University under grant number NCTB-T.24-TN.101.01.
\begin{center}
	\bibliographystyle{alpha}
	\thispagestyle{empty}
	\bibliography{references.bib}

\end{center}
\end{document}